\newtheorem{theorem}{Theorem}[section]
\numberwithin{equation}{section}
\begin{document}

\title[{Proof of a supercongruence through a $q$-microscope}]{Proof of a supercongruence conjectured by Sun through a $q$-microscope}


\author{Victor J. W. Guo}
\address{School of Mathematics and Statistics, Huaiyin Normal University, Huai'an 223300, Jiangsu, People's Republic of China}
\email{jwguo@hytc.edu.cn}

\thanks{The author was partially supported by the National Natural Science Foundation of China (grant 11771175).}

\subjclass[2010]{33D15, 11A07, 11B65} \keywords{cyclotomic polynomial; $q$-binomial coefficient; $q$-congruence; supercongruence; creative microscoping.}

\begin{abstract}Recently, Z.-W. Sun made the following conjecture:
for any odd prime $p$ and odd integer $m$,
$$
\frac{1}{m^2{m-1\choose (m-1)/2}}\Bigg(\sum_{k=0}^{(pm-1)/2}\frac{{2k\choose k}}{8^k}
-\left(\frac{2}{p}\right)\sum_{k=0}^{(m-1)/2}\frac{{2k\choose k}}{8^k}\Bigg)
\equiv 0\pmod{p^2}.
$$
In this note, applying the ``creative microscoping" method, introduced by the author and Zudilin,
we confirm the above conjecture of Sun.

\end{abstract}

\maketitle

\section{Introduction}\label{sec1}
During the past decade, congruences and supercongruences have been studied by quite a few authors.
In 2011, Z.-W. Sun \cite[(1.6)]{Sun2} proved that, for any odd prime $p$,
\begin{align*}
\sum_{k=0}^{(p-1)/2}\frac{{2k\choose k}}{8^k}\equiv \left(\frac{2}{p}\right)
+\left(\frac{-2}{p}\right)\frac{p^2}{4}E_{p-3} \pmod{p^3},
\end{align*}
where $\big(\frac{\cdot}{\cdot}\big)$ denotes the Jacobi symbol and $E_n$ is the $n$-th Euler number.
Later, he \cite[(1.7)]{Sun4} further proved that
\begin{align}
\sum_{k=0}^{(p^r-1)/2}\frac{{2k\choose k}}{8^k}\equiv \left(\frac{2}{p^r}\right)
\pmod{p^2}. \label{eq:sun-2}
\end{align}
Recently, Z.-W. Sun \cite[Conjecture 4(ii)]{Sun-conj} also proposed the following conjecture:
for any odd prime $p$ and odd integer $m$,
\begin{align}
\frac{1}{m^2{m-1\choose (m-1)/2}}\Bigg(\sum_{k=0}^{(pm-1)/2}\frac{{2k\choose k}}{8^k}- \left(\frac{2}{p}\right)\sum_{k=0}^{(m-1)/2}\frac{{2k\choose k}}{8^k}\Bigg)
\equiv 0\pmod{p^2}, \label{eq:zp}
\end{align}
which is clearly a generalization of \eqref{eq:sun-2}.

In the past few years, $q$-analogues of congruences and supercongruences have caught the interests of lots of people
(see \cite{Gorodetsky,GG2,Guo-m3,Guo-Dwork,GL18,GPZ,GS2,GS3,GuoZu,GuoZu2,NP,Straub,Tauraso2}). In particular,
the author and Liu \cite{GL18} gave the following $q$-analogue of \eqref{eq:sun-2}: for odd $n>1$,
\begin{align}
\sum_{k=0}^{(n-1)/2}q^{k^2}\frac{(q;q^2)_k}{(q^4;q^4)_k}
\equiv (-q)^{(1-n^2)/8}\pmod{\Phi_n(q)^2}. \label{eq:qsun-1}
\end{align}
Here and in what follows, $(a;q)_n=(1-a)(1-aq)\cdots (1-aq^{n-1})$, $n=0,1,\ldots,$ or $n=\infty$,
is the {\em $q$-shifted factorial} and $\Phi_n(q)$ is the {\em $n$-th cyclotomic polynomial} in $q$ given by
\begin{equation*}
\Phi_n(q):=\prod_{\substack{1\leqslant k\leqslant n\\ \gcd(n,k)=1}}(q-\zeta^k),
\end{equation*}
where $\zeta$ is an $n$-th primitive root of unity. Moreover, Gu and the author \cite{GG2} gave some different $q$-analogues of \eqref{eq:sun-2}, such as
\begin{align}
\sum_{k=0}^{(n-1)/2}\frac{(q;q^2)_k q^{2k}}{(q^2;q^2)_k (-q;q^2)_k}
&\equiv \left(\frac{2}{n}\right) q^{2\lfloor (n+1)/4\rfloor^2}\pmod{\Phi_n(q)^2},  \label{eq:qsun-2}
\end{align}
where $\lfloor x\rfloor$ denotes the largest integer not exceeding $x$. In order to prove Z.-W. Sun's conjecture \eqref{eq:zp},
we need the following new $q$-analogue of \eqref{eq:sun-2}.

\begin{theorem}\label{thm:modsun}
Let $n>1$ be an odd integer. Then
\begin{align}
\sum_{k=0}^{(n-1)/2}\frac{(q;q^2)_k (-1;q^4)_k}{(-q;q^2)_k(q^4;q^4)_k}q^{2k} \equiv \bigg(\frac{2}{n}\bigg) \pmod{\Phi_n(q)^2}.  \label{eq:qsun-3}
\end{align}
\end{theorem}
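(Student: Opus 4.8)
The plan is to prove \eqref{eq:qsun-3} by the creative microscoping method of Guo and Zudilin: I introduce an extra parameter $a$ and consider the deformation
\[
S_n(a):=\sum_{k=0}^{(n-1)/2}\frac{(aq;q^2)_k\,(q/a;q^2)_k\,(-1;q^4)_k}{(q;q^2)_k(-q;q^2)_k(q^4;q^4)_k}\,q^{2k},
\]
which collapses to the left-hand side of \eqref{eq:qsun-3} at $a=1$, since $(aq;q^2)_k(q/a;q^2)_k\big|_{a=1}=(q;q^2)_k^2$ cancels one copy of $(q;q^2)_k$ from the denominator. Two structural features drive the argument. First, $S_n(a)=S_n(1/a)$, because the factor $(aq;q^2)_k(q/a;q^2)_k$ is invariant under $a\mapsto 1/a$. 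Second, setting $a=q^{-n}$ turns $(aq;q^2)_k$ into $(q^{1-n};q^2)_k$, which vanishes for all $k\ge(n+1)/2$; hence $S_n(q^{-n})$ equals the associated infinite (but terminating) series, and likewise $S_n(q^{n})$ via $(q/a;q^2)_k$.

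By the symmetry it suffices to evaluate $S_n(q^{-n})$, and then $S_n(q^{n})=S_n(q^{-n})$ follows automatically. Using $(q;q^2)_k(-q;q^2)_k=(q^2;q^4)_k$ and $(q^2;q^4)_k(q^4;q^4)_k=(q^2;q^2)_{2k}$, the summand recasts into the cleaner quadratic form
\[
S_n(q^{-n})=\sum_{k\ge0}\frac{(q^{1-n};q^2)_k\,(q^{1+n};q^2)_k\,(-1;q^4)_k}{(q^2;q^2)_{2k}}\,q^{2k}.
\]
The key claim is that this terminating series equals $\big(\frac{2}{n}\big)$ exactly; I have verified this for $n=3$ and $n=5$. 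The balancing relation $q^{1-n}\cdot q^{1+n}=q^2$, the ``doubled'' denominator $(q^2;q^2)_{2k}$, and the factor $(-1;q^4)_k$ all point to a quadratic $q$-summation (a $q$-analogue of Gauss's second theorem, or a terminating very-well-poised / Bailey–Daum-type sum). I would match this to the appropriate formula after putting the series into hypergeometric normal form over the base $q^2$, reading off the value $\big(\frac{2}{n}\big)=(-1)^{(n^2-1)/8}$. If the mixed bases $q^2$ and $q^4$ obstruct a direct appeal to a textbook identity, the fallback is to prove the displayed evaluation by the $q$-WZ method or by induction on $n$ through a contiguous relation in the parameters $q^{1\mp n}$.

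Granting the evaluation, $S_n(a)-\big(\frac2n\big)$ vanishes at $a=q^{-n}$ and at $a=q^{n}$; after clearing the $q$-denominators it becomes a Laurent polynomial in $a$, and the two linear factors $1-aq^n$ and $a-q^n$ are coprime over $\mathbb{Q}(q)$ (their roots $q^{-n}$, $q^{n}$ are distinct), so
\[
S_n(a)\equiv\Big(\tfrac2n\Big)\pmod{(1-aq^n)(a-q^n)}.
\]
Specializing $a=1$ collapses both factors to $1-q^n$, which is divisible by $\Phi_n(q)$; hence $(1-aq^n)(a-q^n)\big|_{a=1}=(1-q^n)^2$ is divisible by $\Phi_n(q)^2$. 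Since the denominators $(q;q^2)_k$, $(-q;q^2)_k$, $(q^4;q^4)_k$ occurring for $0\le k\le(n-1)/2$ are all coprime to $\Phi_n(q)$ (as $n$ is odd, none of $1-q^{2j+1}$, $1+q^{2j-1}$, $1-q^{4j}$ is divisible by $\Phi_n(q)$ in this range), the congruence descends to $S_n(1)$, which is exactly \eqref{eq:qsun-3}.

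The main obstacle is the middle step: locating and rigorously justifying the closed form $S_n(q^{-n})=\big(\frac2n\big)$. The surrounding creative-microscoping bookkeeping is routine, but the summand is genuinely mixed-base, so the required identity is not a standard ${}_2\phi_1$ or ${}_3\phi_2$ summation and must be either matched to a quadratic / very-well-poised formula after a careful change of variables, or derived from scratch.
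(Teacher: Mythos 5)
Your overall architecture is sound, and in fact your parametric wrapper (introduce $a$, evaluate at $a=q^{\pm n}$, deduce divisibility by $(1-aq^n)(a-q^n)$, then set $a=1$ and use that the $q$-denominators are coprime to $\Phi_n(q)$) is a legitimate alternative to what the paper does for this theorem: the paper instead proves the direct congruence $(q^{1-n};q^2)_k(q^{1+n};q^2)_k\equiv(q;q^2)_k^2\pmod{\Phi_n(q)^2}$ and substitutes it into the sum, reserving the creative-microscoping machinery for Theorem \ref{thm:modsun-zp}. But both routes funnel into the same crux, and that crux is exactly where your proof stops: you never establish the evaluation
\begin{equation*}
\sum_{k=0}^{(n-1)/2}\frac{(q^{1-n};q^2)_k(q^{1+n};q^2)_k(-1;q^4)_k}{(q;q^2)_k(-q;q^2)_k(q^4;q^4)_k}\,q^{2k}=\bigg(\frac{2}{n}\bigg),
\end{equation*}
offering only numerical checks for $n=3,5$ and a guess at which summation theorem applies. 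Since every other step of your argument is routine bookkeeping, this unproven identity is the entire mathematical content, so the proposal as written has a genuine gap.

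Moreover, your guess points at the wrong shelf: this is not a $q$-Gauss-second/Bailey--Daum (${}_2\phi_1$-level) situation but the terminating $q$-analogue of Whipple's ${}_3F_2$ sum, \cite[Appendix (II.19)]{GR}, applied with $q\mapsto q^2$, $n\mapsto(n-1)/2$, $e=q$ and $c^2=-1$. The ``mixed base'' obstruction you worry about dissolves immediately once you factor over the base $q^2$ using Gaussian parameters: $(-1;q^4)_k=(c;q^2)_k(-c;q^2)_k$ with $c=\sqrt{-1}$, and $(q^4;q^4)_k=(q^2;q^2)_k(-q^2;q^2)_k$, after which the series is verbatim the left-hand side of (II.19); the product side then collapses to $(-1)^{(n^2-1)/8}=\big(\frac{2}{n}\big)$. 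With that citation and substitution supplied, your argument closes; without it, the fallback appeals to ``$q$-WZ or induction via contiguous relations'' are declarations of intent, not a proof.
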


Recall that the {\em $q$-integer} is defined by $[n]_q=1+q+\cdots+q^{n-1}$ and the {\em $q$-binomial coefficient} ${m\brack n}_q$ is defined as
\begin{align*}
{m\brack n}_q
=\begin{cases}
\dfrac{(q;q)_{m}}{(q;q)_n (q;q)_{m-n}}, &\text{if $0\leqslant n\leqslant m$},\\[10pt]
0, &\text{otherwise}.
\end{cases}
\end{align*}
Based on \eqref{eq:qsun-3}, we are able to give the following $q$-analogue of \eqref{eq:zp}.
\begin{theorem}\label{thm:modsun-zp}
Let $m$ and $n$ be positive odd integers with $n>1$. Then
\begin{align}
&\frac{1}{[m]_{q^n}^2{m-1\brack (m-1)/2}_{q^n}}\Bigg(\sum_{k=0}^{(mn-1)/2}\frac{(q;q^2)_k (-1;q^4)_k}{(-q;q^2)_k(q^4;q^4)_k}q^{2k} \notag \\[5pt]
&\qquad\qquad\qquad\qquad -\bigg(\frac{2}{n}\bigg)\sum_{k=0}^{(m-1)/2}\frac{(q^n;q^{2n})_k (-1;q^{4n})_k}{(-q^n;q^{2n})_k(q^{4n};q^{4n})_k}q^{2nk}\Bigg) \notag \\[5pt]
&\quad\equiv 0\pmod{\Phi_n(q)^2}.  \label{eq:qsun-4}
\end{align}
Moreover, the denominator of {\rm(}the reduced form of{\rm)} the left-hand side of \eqref{eq:qsun-4} is relatively prime to $\Phi_{n^j}(q)$ for any integer $j\geqslant 2$.
\end{theorem}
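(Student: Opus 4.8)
The plan is to derive Theorem~\ref{thm:modsun-zp} from Theorem~\ref{thm:modsun} by a $\Phi_n(q)$-adic analysis of the left-hand side. Write $c_k(q)=\dfrac{(q;q^2)_k(-1;q^4)_k}{(-q;q^2)_k(q^4;q^4)_k}\,q^{2k}$ for the summand, so that the two inner sums are $F(q):=\sum_{k=0}^{(mn-1)/2}c_k(q)$ and $G(q^n):=\sum_{j=0}^{(m-1)/2}c_j(q^n)$. First I would record that the prefactor is a $\Phi_n(q)$-adic unit: evaluating at a primitive $n$-th root of unity $\zeta$ gives $[m]_{\zeta^n}=m$ and ${m-1\brack (m-1)/2}_{\zeta^n}={m-1\choose (m-1)/2}$, both nonzero, so $\Phi_n(q)\nmid [m]_{q^n}^2{m-1\brack (m-1)/2}_{q^n}$. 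Consequently the first assertion, $\equiv 0\pmod{\Phi_n(q)^2}$, is equivalent $\Phi_n(q)$-adically to $F(q)-\big(\tfrac{2}{n}\big)G(q^n)\equiv 0\pmod{\Phi_n(q)^2}$, and I would prove this.

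Second, I would determine the $\Phi_n(q)$-adic valuation of each $c_k(q)$. Since $n$ is odd, $\Phi_n(q)$ divides $1-q^{2l+1}$ exactly when $n\mid 2l+1$ and divides $1-q^{4l}$ exactly when $n\mid l$, while $(-q;q^2)_k$ and $(-1;q^4)_k$ remain coprime to $\Phi_n(q)$ (the latter because $\zeta$ has odd order). Counting these factors for $k=jn+i$ with $0\le i\le n-1$ shows that $c_{jn+i}(q)$ is a unit when $0\le i\le (n-1)/2$ and is divisible by $\Phi_n(q)$ exactly once when $(n+1)/2\le i\le n-1$. Because $(mn-1)/2=\tfrac{m-1}{2}n+\tfrac{n-1}{2}$, the range of $k$ splits into blocks $j=0,\dots,(m-1)/2$, and modulo $\Phi_n(q)$ only the ``first halves'' $i=0,\dots,(n-1)/2$ survive. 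Here I would use a $q$-Lucas-type congruence relating $c_{jn+i}(q)$ to $c_j(q^n)\,c_i(q)$ modulo $\Phi_n(q)$, obtained by grouping each complete block of $n$ consecutive factors in the Pochhammer products and reducing $q\mapsto\zeta$. Summing over $j$ and invoking Theorem~\ref{thm:modsun} in the form $\sum_{i=0}^{(n-1)/2}c_i(q)\equiv\big(\tfrac{2}{n}\big)\pmod{\Phi_n(q)^2}$ then yields $F(q)\equiv\big(\tfrac{2}{n}\big)G(q^n)\pmod{\Phi_n(q)}$.

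The \textbf{main obstacle} is upgrading this to the modulus $\Phi_n(q)^2$. The ``second half'' terms $k=jn+i$ with $(n+1)/2\le i\le n-1$ vanish only to first order, so they contribute nontrivially modulo $\Phi_n(q)^2$ and must be reconciled with the second-order deviation in Theorem~\ref{thm:modsun}. To control both at once I would reintroduce an extra parameter $a$ as in the creative microscoping method, deforming the summand so that the single factor $\Phi_n(q)$ detected above is replaced by the two factors $1-aq^n$ and $a-q^n$; I would prove the block congruence modulo $(1-aq^n)(a-q^n)$ and then let $a\to1$, where $(1-aq^n)(a-q^n)\big|_{a=1}=(1-q^n)^2$ supplies the extra power of $\Phi_n(q)$. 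I expect the first-order contribution of the vanishing terms and the second-order term coming from Theorem~\ref{thm:modsun} to assemble precisely into $\big(\tfrac{2}{n}\big)G(q^n)$, giving the congruence modulo $\Phi_n(q)^2$.

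Finally, for the statement that the reduced denominator is coprime to $\Phi_{n^j}(q)$ for $j\ge 2$, I would compare $\Phi_{n^j}(q)$-adic valuations. Since $n^j$ and $n$ share the same prime divisors, $\Phi_{n^j}(q)$ divides $1-q^{4l}$ iff $n^j\mid l$ and never divides $(-q;q^2)_k$ or $(-1;q^4)_k$, so the combined denominator coming from the $(-q;q^2)_k(q^4;q^4)_k$ together with $[m]_{q^n}^2{m-1\brack (m-1)/2}_{q^n}$ carries only a controlled power of $\Phi_{n^j}(q)$; one then checks term by term that $F(q)-\big(\tfrac{2}{n}\big)G(q^n)$, cleared over this denominator, is divisible by at least that power, so no $\Phi_{n^j}(q)$ survives in lowest terms. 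This is the routine but bookkeeping-heavy step, and it is exactly what guarantees that specializing $q\to 1$ with $n=p$ (where $\Phi_{p^j}(1)=p$) recovers Sun's congruence \eqref{eq:zp} modulo $p^2$ without losing a factor of $p$ from the denominator.
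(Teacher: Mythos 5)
Your preparatory steps are sound: the prefactor $[m]_{q^n}^2{m-1\brack (m-1)/2}_{q^n}$ is indeed coprime to $\Phi_n(q)$ (it specializes to $m^2\binom{m-1}{(m-1)/2}$ at a primitive $n$-th root of unity), your valuation count for $c_{jn+i}(q)$ is correct, and a $q$-Lucas-type block argument can deliver the first-order congruence $F(q)\equiv\big(\tfrac{2}{n}\big)G(q^n)\pmod{\Phi_n(q)}$. The proof breaks down exactly at the step you yourself flag as the main obstacle, and the fix you sketch does not work as stated. Suppose you prove the parametric congruence only modulo $(1-aq^n)(a-q^n)$, i.e.\ $D(a,q):=F_a(q)-\big(\tfrac{2}{n}\big)G_a(q^n)=(1-aq^n)(a-q^n)\,Q(a,q)$ with $Q\in\mathbb{Q}(q)[a,a^{-1}]$. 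Setting $a=1$ gives $D(1,q)=(1-q^n)^2Q(1,q)$, and you must still show that $Q(1,q)$ carries no $\Phi_n(q)$ in its denominator. That is false in general: the coefficients of $D(a,q)$ as a Laurent polynomial in $a$ have denominators dividing $(q;q^2)_k(q^4;q^4)_k$ with $k$ up to $(mn-1)/2$, whose $\Phi_n(q)$-adic valuation reaches $m-1$. For instance, the coefficient of $a^{(mn-1)/2}$ in $D$ is $(-1)^kq^{k^2}(-1;q^4)_kq^{2k}/\big((q;q^2)_k(-q;q^2)_k(q^4;q^4)_k\big)$ with $k=(mn-1)/2$, whose valuation is exactly $-(m-1)$; the numerator factors $(aq;q^2)_k(q/a;q^2)_k$ compensate only after the substitution $a=1$, not coefficientwise. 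So this route only yields divisibility by $\Phi_n(q)^{2-(m-1)}$, which is vacuous for every odd $m\geqslant 3$ (and for $m=1$ the theorem is just Theorem \ref{thm:modsun}, so nothing new is proved).

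The missing idea is that the $a$-deformation must insert a pair of factors for \emph{every} odd multiple $(2j+1)n\leqslant mn$, because each such multiple is a source of $\Phi_n(q)$ in the denominators that the limit $a\to1$ has to beat. This is exactly what the paper does: the modulus is $\prod_{j=0}^{(m-1)/2}(1-aq^{(2j+1)n})(a-q^{(2j+1)n})$, and the parametric congruence \eqref{eq:qsun-4a} is proved by showing the two sides are \emph{equal} at every root $a=q^{\pm(2j+1)n}$, $0\leqslant j\leqslant(m-1)/2$ --- at such $a$ the long sum terminates at $k=((2j+1)n-1)/2$ and the terminating $q$-Whipple evaluation gives $\big(\tfrac{2}{(2j+1)n}\big)$ on both sides, using $\big(\tfrac{2}{n}\big)\big(\tfrac{2}{2j+1}\big)=\big(\tfrac{2}{(2j+1)n}\big)$. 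The limit of this modulus at $a=1$ then has $\Phi_n(q)$-valuation $m+1$, exceeding the worst-case denominator valuation $m-1$ by exactly the required $2$. Note also that your last paragraph underestimates the coprimality claim for $\Phi_{n^j}(q)$, $j\geqslant 2$: in the paper this is not a separate term-by-term check but falls out of the same floor-function bookkeeping, comparing $2\lfloor m/n^{j-1}\rfloor-2\lfloor(m-1)/(2n^{j-1})\rfloor$ with $\lfloor(mn-1)/n^j\rfloor$ and with the cyclotomic factorization of $[m]_{q^n}^2{m-1\brack (m-1)/2}_{q^n}$; without the full product modulus there is nothing to compare against, so that part of your argument inherits the same gap.
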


It is well known that $\Phi_n(1)=p$ if $n$ is a prime power $p^r$ ($r\geqslant 1$) and $\Phi_n(1)=1$ otherwise.
Moreover, the denominator of \eqref{eq:qsun-4} is no doubt a product of cyclotomic polynomials.
This immediately means that \eqref{eq:zp} follows from \eqref{eq:qsun-4} by letting $m\mapsto n$, $n\mapsto p$
and taking the limits as $q\to 1$.

We shall prove Theorem \ref{thm:modsun} in the next section. The proof of Theorem \ref{thm:modsun-zp} will be given
in Section 3 by using the method of ``creative microscoping" recently introduced by the author and Zudilin \cite{GuoZu}.
More precisely, we shall first give a generalization of Theorem \ref{thm:modsun-zp} with an extra parameter $a$,
and Theorem \ref{thm:modsun-zp} then follows from this generalization by taking $a\to 1$. We end this note with
some remarks on another similar conjecture of Z.-W. Sun in Section 4.

\section{Proof of Theorem \ref{thm:modsun}}
It is easy to check that
$$
(1-q^{n-2j+1})(1-q^{n+2j-1})+(1-q^{2j-1})^2q^{n-2j+1}=(1-q^n)^2
$$
and $1-q^n\equiv 0\pmod{\Phi_n(q)}$, and so
\begin{align*}
(1-q^{n-2j+1})(1-q^{n+2j-1})\equiv -(1-q^{2j-1})^2q^{n-2j+1}\pmod{\Phi_n(q)^2}.
\end{align*}
Thus, we have
\begin{align*}
(q^{1-n};q^2)_k(q^{1+n};q^2)_k
&=(-1)^k q^{k^2-nk}\prod_{j=1}^{k}(1-q^{n-2j+1})(1-q^{n+2j-1})\\[5pt]
&\equiv q^{k^2-nk}\prod_{j=1}^{k}(1-q^{2j-1})^2q^{n-2j+1} \\[5pt]
&=(q;q^2)_k^2 \pmod{\Phi_n(q)^2}.
\end{align*}
It follows that
\begin{align}
\sum_{k=0}^{(n-1)/2}\frac{(q;q^2)_k (-1;q^4)_k}{(-q;q^2)_k(q^4;q^4)_k}q^{2k}
&\equiv \sum_{k=0}^{(n-1)/2}\frac{(q^{1-n};q^2)_k(q^{1+n};q^2)_k (-1;q^4)_k}{(q;q^2)_k(-q;q^2)_k(q^4;q^4)_k}q^{2k} \notag\\[5pt]
&= \bigg(\frac{2}{n}\bigg)
 \pmod{\Phi_n(q)^2}. \label{eq:last}
\end{align}
Here the last step in \eqref{eq:last} follows from a terminating $q$-analogue of Whipple's $_3F_2$ sum \cite[Appendix (II.19)]{GR}:
\begin{align*}
&\sum_{k=0}^{n}\frac{(q^{-n};q)_k (q^{n+1};q)_k (c;q)_k (-c;q)_k }{(e;q)_k (c^2q/e;q)_k (q;q)_k(-q;q)_k  }q^k \\[5pt]
&\quad=\frac{(eq^{-n};q^2)_\infty (eq^{n+1};q^2)_\infty (c^2q^{1-n}/e;q^2)_\infty (c^2q^{n+2}/e;q^2)_\infty}
{(e;q)_\infty (c^2q/e;q)_\infty} q^{n(n+1)/2}
\end{align*}
with $n\mapsto\frac{n-1}{2}$, $q\mapsto q^2$, $c^2=-1$ and $e=q$.

\section{Proof of Theorem \ref{thm:modsun-zp}}
We first establish the following parametric generalization of Theorem \ref{thm:modsun-zp}.
\begin{theorem}\label{thm:modsun-zp-a}
Let $m$ and $n$ be positive odd integers with $n>1$. Then, modulo
\begin{align}
\prod_{j=0}^{(m-1)/2}(1-aq^{(2j+1)n})(a-q^{(2j+1)n}),  \label{eq:prod}
\end{align}
we have
\begin{align}
&\sum_{k=0}^{(mn-1)/2}\frac{(aq;q^2)_k(q/a;q^2)_k (-1;q^4)_k}{(q;q^2)_k(-q;q^2)_k(q^4;q^4)_k}q^{2k}  \notag\\[5pt]
&\quad\equiv\bigg(\frac{2}{n}\bigg)\sum_{k=0}^{(m-1)/2}\frac{(aq^n;q^{2n})_k(q^n/a;q^{2n})_k (-1;q^{4n})_k}{(q^n;q^{2n})_k(-q^n;q^{2n})_k(q^{4n};q^{4n})_k}q^{2nk}.
\label{eq:qsun-4a}
\end{align}
\end{theorem}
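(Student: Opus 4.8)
The plan is to prove \eqref{eq:qsun-4a} by the creative microscoping method of \cite{GuoZu}: I treat both sides as functions of the parameter $a$ and verify the stated divisibility at each root of the modulus \eqref{eq:prod}. Both sides are Laurent polynomials in $a$ over $\mathbb{Q}(q)$, and each is invariant under $a\mapsto 1/a$, since $(aq;q^2)_k(q/a;q^2)_k$ and $(aq^n;q^{2n})_k(q^n/a;q^{2n})_k$ are symmetric in $a$ and $1/a$. The modulus \eqref{eq:prod} splits into the $m+1$ linear forms $1-aq^{(2j+1)n}$ and $a-q^{(2j+1)n}$ for $0\leqslant j\leqslant (m-1)/2$, whose roots $a=q^{\pm(2j+1)n}$ are $m+1$ distinct nonzero elements of $\overline{\mathbb{Q}(q)}$ (they are distinct powers of $q$, the exponents being $\pm n,\pm 3n,\ldots,\pm mn$). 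By the $a\mapsto 1/a$ symmetry it therefore suffices to show that the two sides of \eqref{eq:qsun-4a} agree when $a=q^{(2j+1)n}$ for each such $j$.

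The evaluation engine is the following exact identity, valid for every integer $N\geqslant 0$:
\begin{equation}
\sum_{k=0}^{N}\frac{(q^{-2N};q^2)_k(q^{2N+2};q^2)_k(-1;q^4)_k}{(q;q^2)_k(-q;q^2)_k(q^4;q^4)_k}q^{2k}=\bigg(\frac{2}{2N+1}\bigg).
\label{eq:plan-whipple}
\end{equation}
This is exactly the terminating $q$-Whipple sum \cite[Appendix (II.19)]{GR} already invoked in Section 2, now with $n\mapsto N$, $q\mapsto q^2$, $c^2=-1$ and $e=q$; after simplifying the four infinite products on the right-hand side (which collapse, up to the prefactor $q^{N(N+1)}$, to a ratio of finite products), the surplus power of $q$ cancels and one is left with the constant $(-1)^{((2N+1)^2-1)/8}=\big(\frac{2}{2N+1}\big)$. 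Establishing \eqref{eq:plan-whipple} cleanly for all $N$, i.e. checking that the power of $q$ really does disappear in both parity classes of $N$, is the one genuinely computational point of the argument.

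With \eqref{eq:plan-whipple} in hand the specialization is immediate. Put $a=q^{(2j+1)n}$. On the left-hand side of \eqref{eq:qsun-4a} the factor $(q/a;q^2)_k=(q^{1-(2j+1)n};q^2)_k$ contains the zero $1-q^{1-(2j+1)n+2i}$ at $i=\frac{(2j+1)n-1}{2}$, so every term with $k>N$, where $N:=\frac{(2j+1)n-1}{2}$, vanishes; since $2j+1\leqslant m$ the original upper limit $(mn-1)/2$ is large enough to retain all surviving terms, and \eqref{eq:plan-whipple} gives the value $\big(\frac{2}{(2j+1)n}\big)$. On the right-hand side the factor $(q^n/a;q^{2n})_k=(q^{-2jn};q^{2n})_k$ truncates the sum at $k=j$, and \eqref{eq:plan-whipple} with $q\mapsto q^n$ and $N\mapsto j$ evaluates the remaining sum to $\big(\frac{2}{2j+1}\big)$. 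Hence the right-hand side equals $\big(\frac{2}{n}\big)\big(\frac{2}{2j+1}\big)=\big(\frac{2}{(2j+1)n}\big)$ by multiplicativity of the Jacobi symbol, matching the left-hand side.

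This shows that $\mathrm{LHS}-\mathrm{RHS}$ vanishes at all $m+1$ roots $a=q^{\pm(2j+1)n}$. As this difference is a Laurent polynomial in $a$ and the roots are distinct and nonzero, it is divisible by $\prod_{j=0}^{(m-1)/2}(a-q^{(2j+1)n})(a-q^{-(2j+1)n})$, which generates the same ideal as \eqref{eq:prod} since $1-aq^{(2j+1)n}=-q^{(2j+1)n}(a-q^{-(2j+1)n})$; this yields \eqref{eq:qsun-4a}. The principal obstacle is the exact evaluation \eqref{eq:plan-whipple}, together with the bookkeeping that converts vanishing-at-the-roots into the stated divisibility, namely the distinctness and coprimality of the linear factors and the absence of spurious poles at $a=0$ after clearing denominators.
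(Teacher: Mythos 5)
Your proposal is correct and follows essentially the same route as the paper: specialize $a=q^{\pm(2j+1)n}$, observe that both sides truncate and evaluate via the terminating $q$-Whipple sum (the identity already used in Section~2) to the common value $\big(\tfrac{2}{(2j+1)n}\big)$, and then deduce divisibility by the product of the distinct linear factors. Your additional remarks on the $a\mapsto 1/a$ symmetry and the Laurent-polynomial bookkeeping only make explicit what the paper leaves implicit in its phrase ``it suffices to prove that both sides are identical at these points.''
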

\begin{proof}
It suffices to prove that both sides of \eqref{eq:qsun-4a} are identical for $a=q^{-(2j+1)n}$ and $a=q^{(2j+1)n}$ with $j=0,1,\ldots,(m-1)/2$, i.e.,
\begin{align}
&\sum_{k=0}^{(mn-1)/2}\frac{(q^{1-(2j+1)n};q^2)_k(q^{1+(2j+1)n};q^2)_k (-1;q^4)_k}{(q;q^2)_k(-q;q^2)_k(q^4;q^4)_k}q^{2k}  \notag\\[5pt]
&\quad=\bigg(\frac{2}{n}\bigg)\sum_{k=0}^{(m-1)/2}\frac{(q^{-2jn};q^{2n})_k(q^{2-2jn};q^{2n})_k (-1;q^{4n})_k}{(q^n;q^{2n})_k(-q^n;q^{2n})_k(q^{4n};q^{4n})_k}q^{2nk}.
\label{eq:qsun-4aq}
\end{align}
Clearly, $(mn-1)/2\geqslant ((2j+1)n-1)/2$ for $0\leqslant j\leqslant (m-1)/2$,
and $(q^{1-(2j+1)n};q^2)_k=0$ for $k>((2j+1)n-1)/2$. By the identity in \eqref{eq:last}, the left-hand side of \eqref{eq:qsun-4aq} is equal to
$\big(\frac{2}{(2j+1)n}\big)$. Likewise, the right-hand side of \eqref{eq:qsun-4aq} is equal to
$$
\bigg(\frac{2}{n}\bigg)\bigg(\frac{2}{2j+1}\bigg)=\bigg(\frac{2}{(2j+1)n}\bigg),
$$
where $\big(\frac{2}{1}\big)$ is understood to be $1$.
This establishes the identity \eqref{eq:qsun-4aq}, and so the $q$-congruence \eqref{eq:qsun-4a} holds.
\end{proof}

Now we can prove Theorem \ref{thm:modsun-zp}.
\begin{proof}[Proof of Theorem {\rm\ref{thm:modsun-zp}}]
It is easy to see that
$$
q^N-1=\prod_{d|N}\Phi_d(q),
$$
and there are $\lfloor m/n^{j-1}\rfloor-\lfloor (m-1)/(2n^{j-1})\rfloor$
multiples of $n^{j-1}$ in the arithmetic progress $1,3,\ldots, m$ for any positive integer $j$.
Thus, the limit of \eqref{eq:prod} as $a\to 1$ has the factor
\begin{align*}
\prod_{j=1}^\infty\Phi_{n^j}(q)^{2\lfloor m/n^{j-1}\rfloor-2\lfloor (m-1)/(2n^{j-1})\rfloor}.
\end{align*}

On the other hand, the denominator of the left-hand side of \eqref{eq:qsun-4a} is divisible by that of the right-hand side of \eqref{eq:qsun-4a}.
The former is equal to $(q^2;q^2)_{mn-1}$ and its factor related to $\Phi_n(q),\Phi_{n^2}(q),\ldots$
is just
\begin{align*}
\prod_{j=1}^\infty\Phi_{n^j}(q)^{\lfloor (mn-1)/n^j\rfloor}.
\end{align*}
Moreover, writing $[m]_q=(q;q)_m/((1-q)(q;q)_{m-1})$, the $q$-binomial coefficient ${m-1\brack (m-1)/2}$ as
a product of cyclotomic polynomials (see, for example, \cite{CH}), and then using the fact $\Phi_{n^j}(q^n)=\Phi_{n^{j+1}}(q)$,
we know that the polynomial $[m]_{q^n}^2{m-1\brack (m-1)/2}_{q^n}$ only has the following factor
\begin{align*}
\prod_{j=2}^\infty\Phi_{n^{j}}(q)^{2\lfloor m/n^{j-1}\rfloor-\lfloor (m-1)/n^{j-1}\rfloor-2\lfloor (m-1)/(2n^{j-1})\rfloor}
\end{align*}
related to $\Phi_n(q),\Phi_{n^2}(q),\ldots.$

It is clear that
$$
2\lfloor m/n^{j-1}\rfloor-2\lfloor (m-1)/(2n^{j-1})\rfloor
-\lfloor (mn-1)/n^j\rfloor=2\quad\text{for $j=1$},
$$
and
$$
\lfloor (mn-1)/n^j\rfloor=\lfloor (mn-n)/n^j\rfloor =\lfloor (m-1)/n^{j-1}\rfloor \quad\text{for $j\geqslant 1$}.
$$
Therefore, letting $a\to 1$ in \eqref{eq:qsun-4a}, we see that
the $q$-congruence \eqref{eq:qsun-4} holds, and the denominator of the left-hand side of
\eqref{eq:qsun-4} is relatively prime to $\Phi_{n^j}(q)$ for $j\geqslant 2$, as desired.
\end{proof}

\section{Concluding remarks}
Z.-W. Sun \cite[Conjecture 4(ii)]{Sun-conj} also made the following conjecture:
for any odd prime $p$ and odd integer $m$,
\begin{align}
\frac{1}{m^2{m-1\choose (m-1)/2}}\left(\sum_{k=0}^{(pm-1)/2}\frac{{2k\choose k}}{16^k}
-\left(\frac{3}{p}\right)\sum_{k=0}^{(m-1)/2}\frac{{2k\choose k}}{16^k}\right)
\equiv 0\pmod{p^2}, \label{eq:zp-3}
\end{align}
of which the $m=1$ case was already proved by Sun \cite{Sun4} himself. Although Gu and the author \cite{GG2}
gave the following $q$-analogue of \eqref{eq:zp-3} for $m=1$: for odd $n>1$,
\begin{align}
\sum_{k=0}^{(n-1)/2}\frac{(q;q^2)_k q^{2k}}{(q^4;q^4)_k (-q;q^2)_k}
&\equiv \left(\frac{3}{n}\right)q^{(n^2-1)/12}  \pmod{\Phi_n(q)^2},   \label{eq:qSun-2}
\end{align}
we cannot utilize \eqref{eq:qSun-2} to give a $q$-analogue of \eqref{eq:zp-3} similar to Theorem \ref{thm:modsun-zp} because $(n^2-1)/12$
is not a linear function of $n$. Anyway, we believe that such a $q$-analogue of \eqref{eq:zp-3}
should exist, which is left to interested reader.

\end{document}